\newtheorem{theorem}{Theorem}
\theoremstyle{definition}
\theoremstyle{remark}
\newtheorem{remark}{Remark}
\newtheorem{example}{Example}
\begin{document}

\title{Constructing Goeritz matrix from Dehn coloring matrix}

\author{Masaki Horiuchi}
\address{Fukushima Prefectural Fukushima High School, 5-72 Moriai-cho Fukushima-city Fukushima 960-8002 Japan}

\author{Kazuhiro Ichihara}
\address{Department of Mathematics, College of Humanities and Sciences, Nihon University, 3-25-40 Sakurajosui, Setagaya-ku, Tokyo 156-8550, Japan}
\email{ichihara.kazuhiro@nihon-u.ac.jp}

\author{Eri Matsudo}
\address{The Institute of Natural Sciences, Nihon University, 3-25-40 Sakurajosui, Setagaya-ku, Tokyo 156-8550, Japan}
\email{matsudo.eri@nihon-u.ac.jp}

\author{Sota Yoshida}
\address{Fukushima Prefectural Fukushima High School, 5-72 Moriai-cho Fukushima-city Fukushima 960-8002 Japan}

\keywords{Goeritz matrix, Dehn coloring, knot}

\subjclass[2020]{57K10}

\date{\today}

\begin{abstract}
In 1933, L.Goeritz introduced an integral matrix associated to a knot diagram, which is now called a Goeritz matrix, and proved that the absolute value of the determinant of the matrix gives an invariant of a knot. Recently, it was shown that the Goeritz matrix is closely related to the Dehn coloring of knots. In this paper, for a knot diagram, we give an algorithm to construct a Goeritz matrix from a Dehn coloring matrix, from which Dehn colorings are induced, with some geometric information of the diagram. Moreover, if the knot diagram is prime, we give a purely algebraic construction of a Goeritz matrix from a Dehn coloring matrix.
\end{abstract}

\maketitle

\section{Introduction}
In \cite{Goeritz}, Goeritz introduced an integral matrix associated to a knot diagram, which is now called a \textit{Goeritz matrix}, and proved that the absolute value of the determinant of the matrix gives an invariant of a knot. 
It was suggested in \cite{LameySilverWilliams} that the Goeritz matrix is closely related to the \textit{Dehn coloring} of knots in the study of coloring of plane graphs. 
But only alternating knots were considered there. 
In \cite{Traldi}, as an extension and a refinement of the results obtained in \cite{LameySilverWilliams} and also in \cite{Nanyes}, the solution space of the equations with Goeritz matrix (precisely, unreduced Goeritz matrix called in \cite{Traldi}) as a coefficient matrix is isomorphic to the linear space consisting of the Dehn colorings for a knot (\cite[Theorem 4.3]{Traldi}).

In this paper, for a knot diagram, we give an algorithm to construct a Goeritz matrix from a Dehn coloring matrix, from which Dehn colorings are induced, with some geometric information of the diagram (Theorem~\ref{Thm1}). 
Moreover, if the knot diagram is prime, we give a purely algebraic construction of a Goeritz matrix from a Dehn coloring matrix (Theorem~\ref{Thm2}). 

In the rest of this section, we will briefly review some basic definitions. 

Let us start with the definition of the Goeritz matrix for a knot diagram. 
Let $D$ be a knot diagram endowed with a checkerboard coloring on the complementary regions. 
Fo simplicity, we always assume that the unbounded region is shaded by the checkerboard colorings. 
Assign $+1$ or $-1$ to each of the crossings of $D$ as illustrated in Figure~\ref{FigGi} depending upon the position of shaded regions around the crossings. 
We call the $+1$ or $-1$ the \textit{Goeritz index} of the crossing following \cite{Traldi}. 
\begin{figure}[htbt]
  \begin{center}
    {\unitlength=1mm
  \begin{picture}(80,22)
   \put(25,4){\includegraphics[width=3cm]{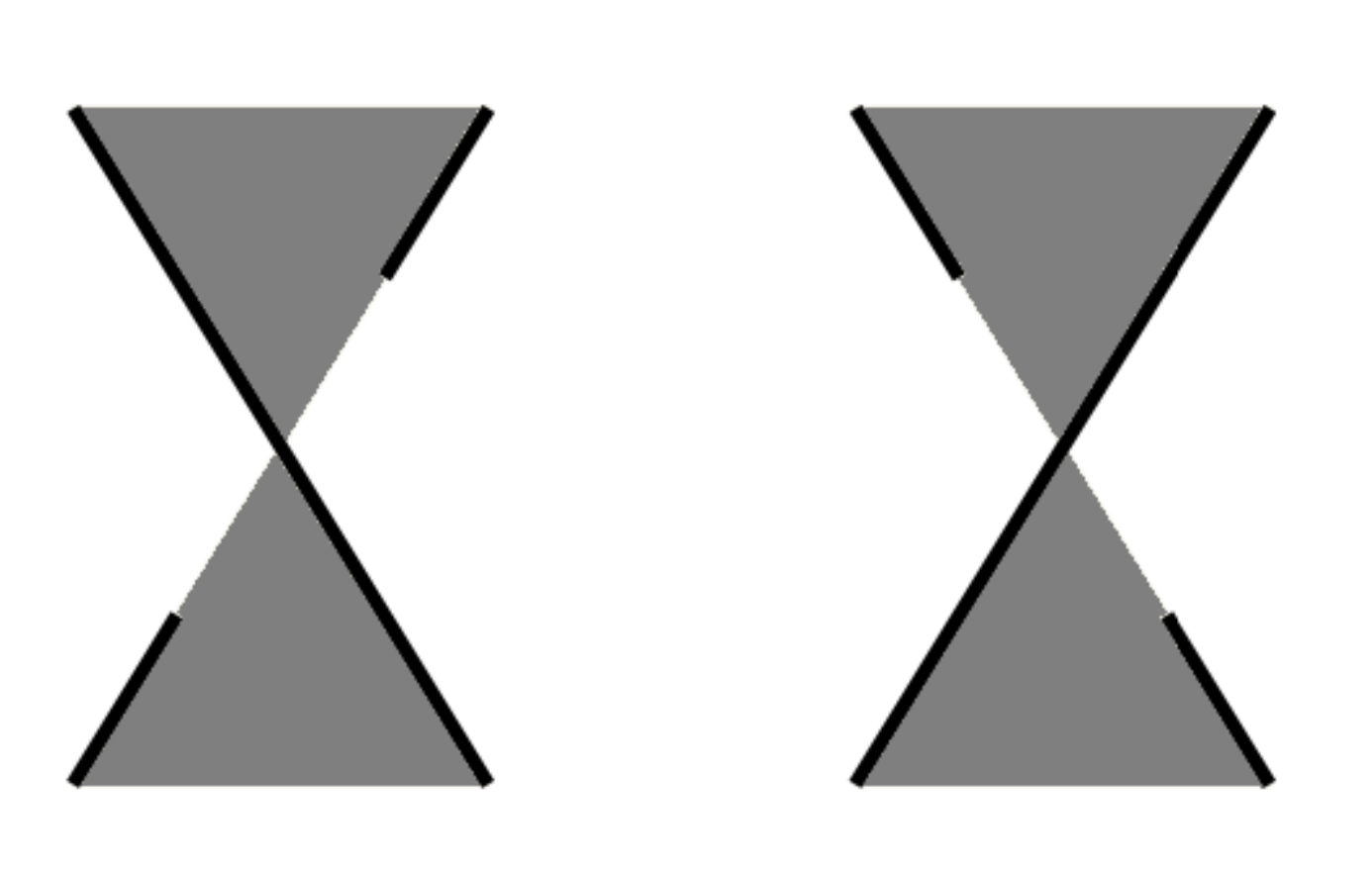}}
   \put(29,-1){$-1$}
   \put(47,-1){$+1$}
  \end{picture}}
\caption{Goeritz index of a crossing}\label{FigGi}
  \end{center}
\end{figure}

For the shaded regions of $D$, we set variables $X_1,\dots,X_n$. 
For a pair of shaded regions corresponding to the variables $X_j$ and $X_k$ ($j \ne k$), 
let $c_{jk}$ denote the sum of the Goeritz indices for the crossings, where the two regions meet. 

Then the matrix $G=(g_{ij})$ defined by the following is called the \textit{Goeritz matrix} of the diagram $D$. 
$$g_{jk} = \left\{
\begin{array}{ll}\displaystyle
c_{jk} & (j \neq k)\\
-\Sigma_{\ell \ne j} c_{j\ell} & (j=k)
\end{array}
\right.$$

\begin{remark}
The matrix $G=(g_{ij})$ obtained above is a $b \times b$ matrix, where $b$ is the number of the shaded regions of $D$. 
In some literature, this matrix is called the pre-Goeritz matrix, and then the matrix obtained from that by removing one row and one column is called the Goeritz matrix. 
The determinant of this Goeritz matrix (latter) is known as an invariant of a knot, which is now called the \textit{determinant} (or the knot determinant) of the knot. 
There are several ways to compute this invariant, one of which is given from the Dehn coloring matrix defined later. 
\end{remark}

\begin{example}\label{Ex2}
Let us consider the knot $8_{19}$ in the well-known knot table. 
The knot has the diagram $D$ illustrated in Figure~\ref{Fig4} together with the checkerboard coloring  with the unbounded region shaded. 
In fact, the diagram $D$ is a minimal diagram of the knot, for the knot is actually the torus knot of type $(3,4)$. 

\begin{figure}[htbt]
  {\unitlength=1mm
  \begin{picture}(80,50)
   \put(10,0){\includegraphics[width=6cm]{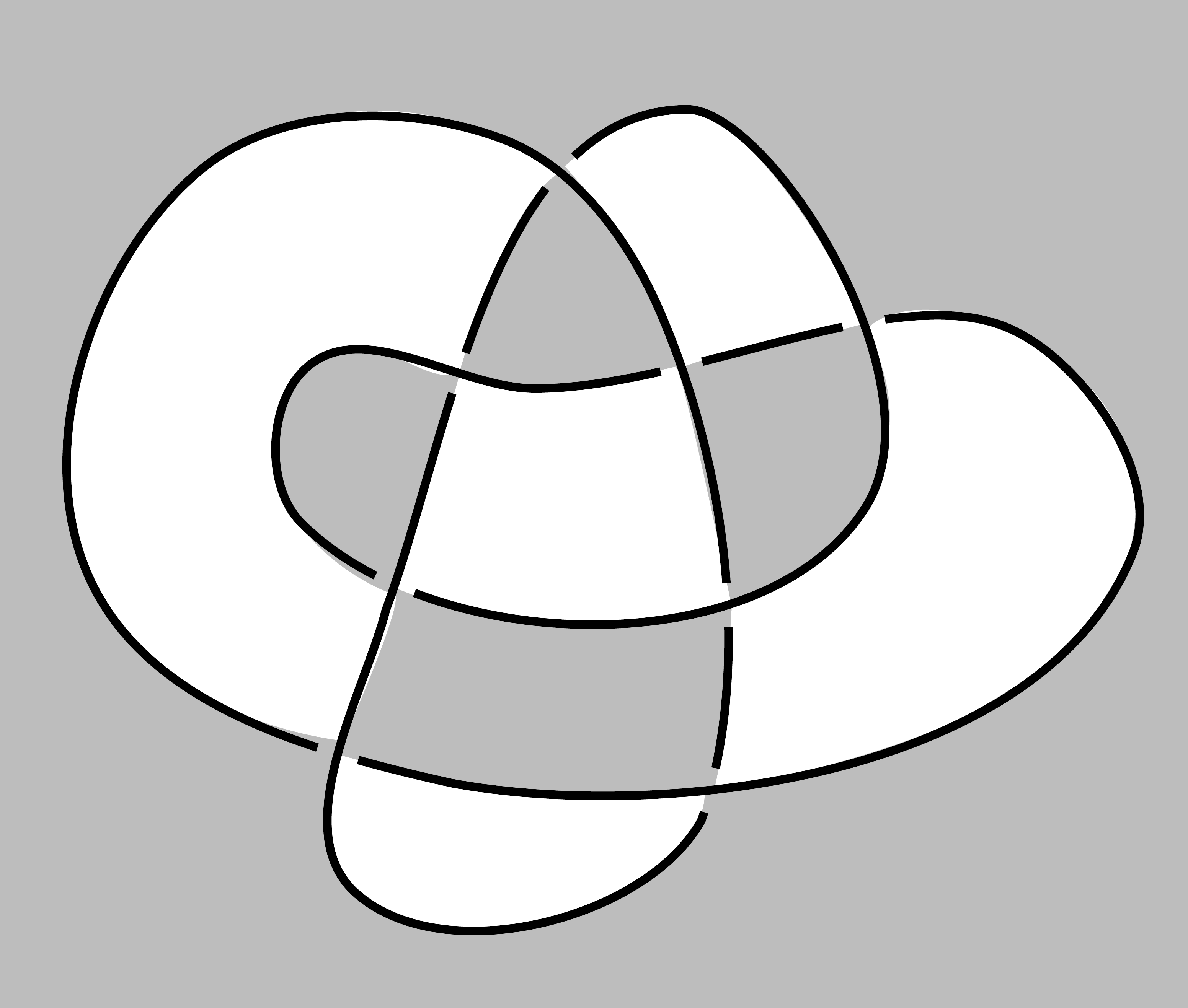}}
   \put(55,40){$B_1$}
   \put(37,35){$B_2$}
   \put(25,27){$B_3$}
   \put(48,27){$B_4$}
   \put(36,13){$B_5$}
  \end{picture}}
\caption{A diagram of $8_{19}$ with a checkerboard coloring.}
\label{Fig4}
\end{figure}

Let $B_1,\dots,B_5$ be the shaded regions and set variables $X_1,\dots,X_5$ corresponding to them  depicted in Figure~\ref{Fig4}. 
Then the Goeritz matrix is obtained as follows. 
\[
\begin{pmatrix}
4 & -1 & 0 & -1 & -2 \\
-1 & -1 & 1 & 1 & 0 \\
0 & 1 & -2 & 0 & 1 \\
-1 & 1 & 0 & -1 & 1 \\
-2 & 0 & 1 & 1 & 0
\end{pmatrix}
\]
\end{example}

\bigskip

Next we give the definition of Dehn coloring equations and a Dehn coloring matrix. 
Let $D$ be an oriented knot diagram of a knot with $m$ complementary regions, say, $R_1, \dots, R_m$. 
We set variables $Z_1, \dots , Z_m$ corresponding to the regions $R_1, \dots, R_m$ of $D$. 
At each crossing of $D$, we consider an equation $Z_i + Z_j - Z_k - Z_l = 0$, where the variables $Z_i, Z_j, Z_k, Z_l$ correspond to the regions $R_i,R_j,R_k,R_l$ respectively adjacent to the crossing of $D$ depicted in Figure~\ref{Fig1}. 
We call the system of such equations the \textit{Dehn coloring equations} for $D$. 
Then the \textit{Dehn coloring matrix} of $D$ is defined as the coefficient matrix of the Dehn coloring equations. 

\begin{figure}[htbt]
  {\unitlength=1mm
  \begin{picture}(65,18)
   \put(30,0){\includegraphics[width=1.7cm]{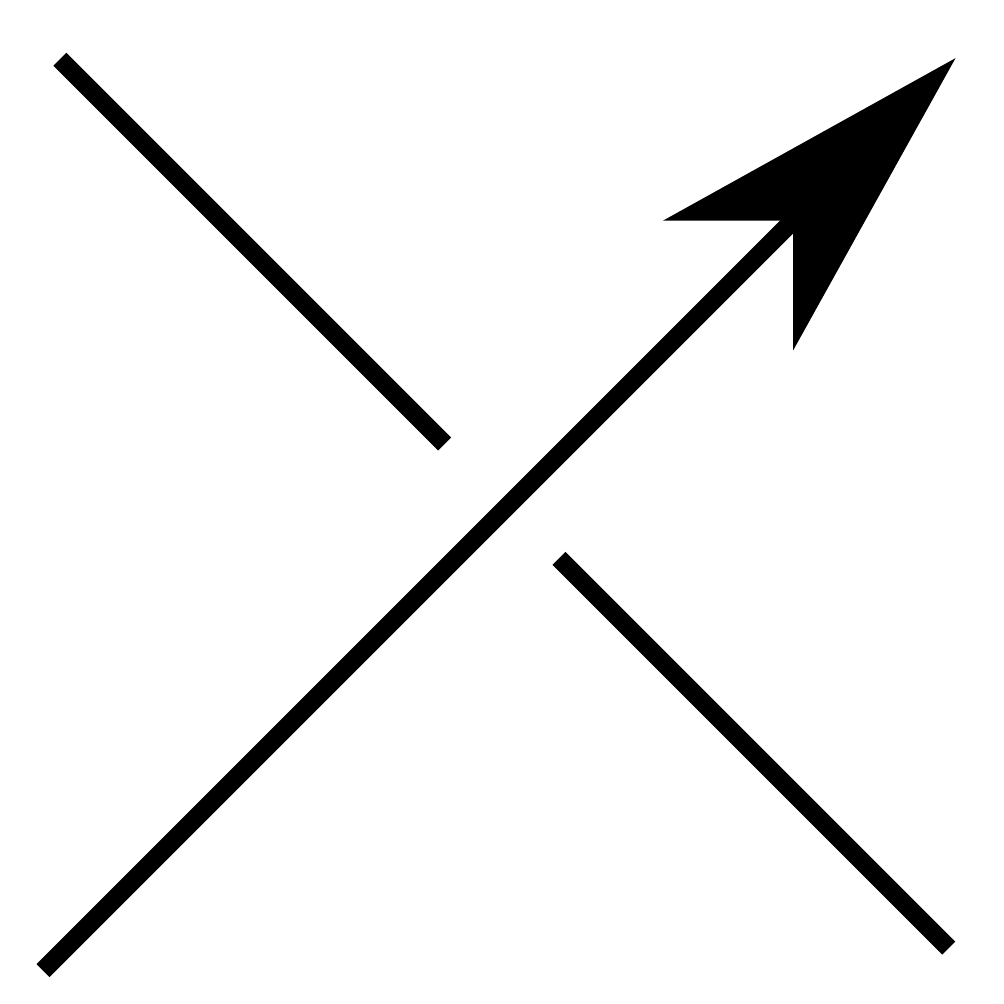}}
   \put(37,15){$R_i$}
   \put(45,7){$R_j$}
   \put(37,0){$R_k$}
   \put(29,7){$R_l$}
  \end{picture}}
\caption{}\label{Fig1}
\end{figure}

\begin{remark}
For a knot diagram $D$, the number of the equations of the Dehn coloring equations is equal to the number of crossings of $D$, say $c(D)$. 
Thus the number of the rows of the Dehn coloring matrix is also equal to $c(D)$. 
Moreover, by the calculating the Euler characteristics, we see that the number of the columns of the Dehn coloring matrix is equal to $c(D)+2$, that is, the number of the rows of the matrix plus two. 
\end{remark}

\begin{example}\label{Ex1}
Consider the diagram $D$ of the knot $8_{19}$ given in Example~\ref{Ex2}. 
Let $R_1, R_2, \dots,R_9,R_0$ be the complementary region of $D$ as in Figure~\ref{Fig2}, and set the corresponding variables $Z_1, Z_2, \dots,Z_9,Z_0$. 

\begin{figure}[htbt]
  {\unitlength=1mm
  \begin{picture}(80,46)
   \put(15,0){\includegraphics[width=6cm]{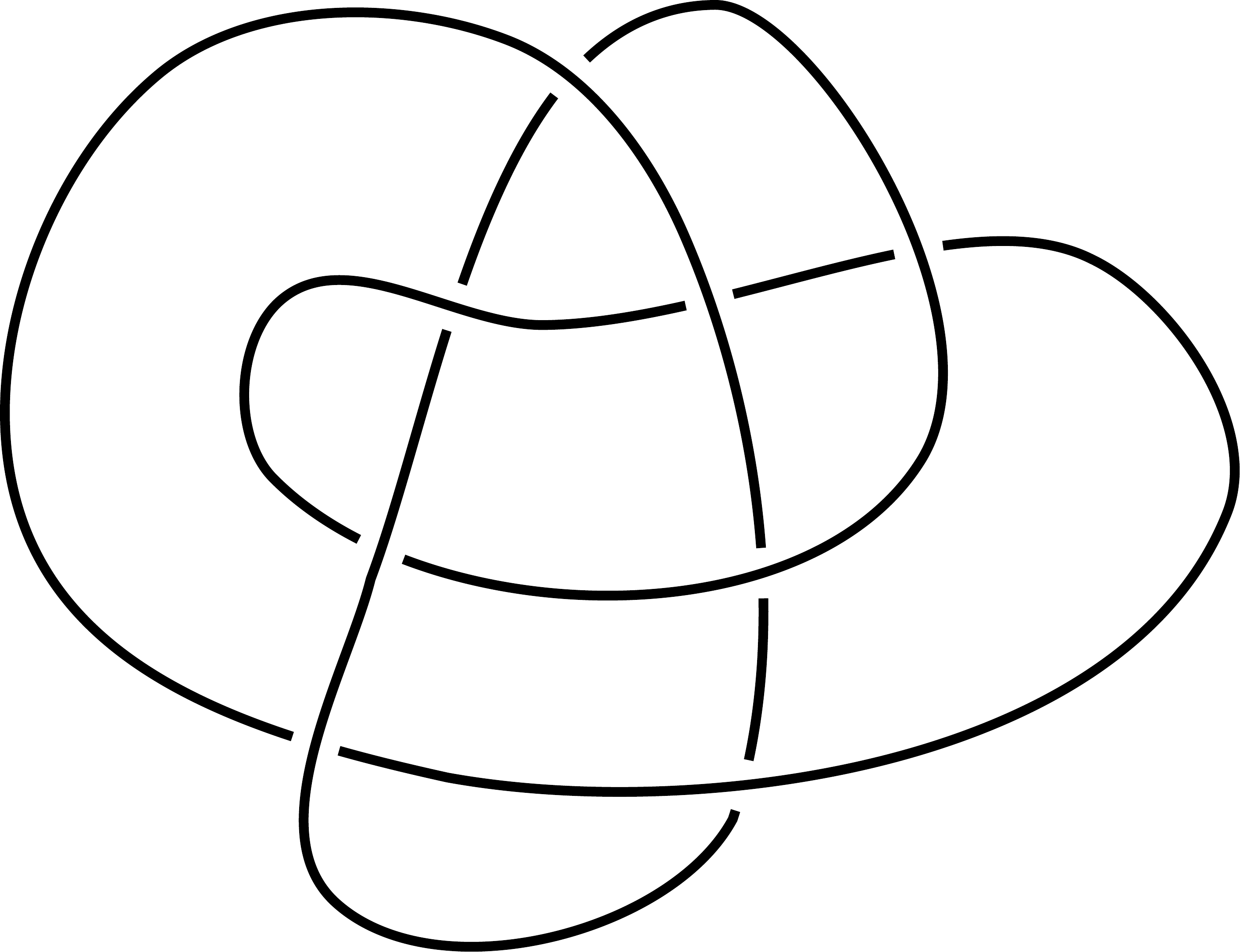}}
   \put(68,40){$R_1$}
   \put(22,35){$R_6$}
   \put(42,35){$R_2$}
   \put(50,38){$R_7$}
   \put(30,27){$R_3$}
   \put(41,24){$R_8$}
   \put(54,27){$R_4$}
   \put(41,12){$R_5$}
   \put(63,20){$R_9$}
   \put(36,3){$R_0$}
   \put(31,44.5){$>$}
  \end{picture}}
\caption{Knot $8_{19}$}
\label{Fig2}
\end{figure}

The Dehn coloring equations for $D$ is obtained as follows. 
\begin{align*}
Z_2 + Z_7 -Z_1 - Z_6 &= 0 \\
Z_4 + Z_8 -Z_2 - Z_7 &= 0 \\
Z_5 + Z_6 -Z_0 - Z_1 &= 0 \\
Z_3 + Z_8 -Z_5 - Z_6 &= 0 \\
Z_4 + Z_9 -Z_1 - Z_7 &= 0 \\
Z_5 + Z_8 -Z_4 - Z_9 &= 0 \\
Z_2 + Z_8 -Z_3 - Z_6 &= 0 \\
Z_0 + Z_5 -Z_1 - Z_9 &= 0
\end{align*}
Here the order of the equations is set as the order of the crossings with the over-arcs passed when one runs on $D$. 

The Dehn coloring matrix for $D$ is as follows. 
\[
\begin{pmatrix}
-1 & 1 & 0 & 0 & 0 & -1 & 1 & 0 & 0 & 0 \\
0 & -1 & 0 & 1 & 0 & 0 & -1 & 1 & 0 & 0 \\
-1 & 0 & 0 & 0 & 1 & 1 & 0 & 0 & 0 & -1 \\
0 & 0 & 1 & 0 & -1 & -1 & 0 & 1 & 0 & 0 \\
-1 & 0 & 0 & 1 & 0 & 0 & -1 & 0 & 1 & 0 \\
0 & 0 & 0 & -1 & 1 & 0 & 0 & 1 & -1 & 0 \\
0 & 1 & -1 & 0 & 0 & -1 & 0 & 1 & 0 & 0 \\
-1 & 0 & 0 & 0 & 1 & 0 & 0 & 0 & -1 & 1 
\end{pmatrix}
\]
\end{example}

\bigskip

\begin{remark}
As a supplement, we give a brief explanation of the Dehn coloring of a knot. 
See \cite{CarterSilverWilliams} for more details. 
Let $p$ be an integer at least 2. 
If the Dehn coloring equation for a knot diagram $D$ has a solution modulo $p$, then the solution induces a map $C: \{ R_1,\dots,R_m \} \to \mathbb{Z} / p \mathbb{Z}$, where $R_1,\dots,R_m$ denote the complementary regions of $D$. 
Such a map $C$ is called a \textit{Dehn $p$-coloring} on $D$. 
Given a Dehn $p$-coloring, $C(R_i)$ is called a \textit{color} of a region $R_i$ by $C$, and if all the regions have the same color by a Dehn $p$-coloring, then such a coloring is called \textit{trivial}. 
On the other hand, for any $p$, there exists a Dehn coloring $C: \{ R_1,\dots,R_m \} \to \{ 0 , 1 \} \subset \mathbb{Z} / p \mathbb{Z}$ corresponding to a checkerboard coloring on the complementary regions $R_1,\dots,R_m $ of the knot diagram. 
If a knot diagram $D$ admits a non-trivial Dehn $p$-coloring not coming from a checkerboard coloring, it can be shown that any other diagram obtained from $D$ by performing Reidemeister moves repeatedly admits such a coloring. 
It follows that admitting a non-trivial Dehn $p$-coloring not coming from a checkerboard coloring is a property of a knot independent from the choice of diagrams. 
Thus one can say that a knot with a diagram admitting such a coloring is \textit{Dehn $p$-colorable}. 
It is known that a knot is Dehn $p$-colorable if and only if it is Fox $p$-colorable, which is well-known and we omit the definition here. 
Moreover it is known to be equivalent that $p$ is a divisor of the determinant of the knot. 
\end{remark}

\section{Results}
The following is the main result of this paper. 

\begin{theorem}\label{Thm1}
Let $M_D$ be the Dehn coloring matrix of a knot diagram $D$ with the checkerboard coloring where the unbounded region is shaded. 
Suppose that the first to the $b$-th columns of $M_D$ correspond to the shaded regions of $D$. 
(that is, $b$ is the number of the shaded regions of $D$.) 
Take the rows of $M_D$ whose $j$-th entries are nonzero. 
Multiply $\pm 1$ to each of the rows so that the $j$-th entry of the obtained row vector coincides with the negative of the Goeritz index of the crossing of the diagram corresponding to the row, and sum them up to obtain a row vector. 
Repeat the procedure for $j=1$ to the number of the columns of $M_D$. 
Then the left half of the matrix consisting of the so obtained row vectors, which corresponds to the first to the $b$-th column, is equal to the Goeritz matrix of $D$. 
Moreover the entries of the rest of the obtained matrix are all zero. 
\end{theorem}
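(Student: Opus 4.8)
The plan is to express the entire construction as a single matrix product and then to analyze its blocks. Write $M_D = [\,S \mid W\,]$, where the columns of $S$ are those corresponding to the $b$ shaded regions (columns $1$ through $b$) and the columns of $W$ to the remaining, unshaded, regions. The first thing I would record is that every entry of $M_D$ lies in $\{0,\pm 1\}$: around a crossing the two corners of each colour are opposite and receive opposite signs in the Dehn coloring equation (Figure~\ref{Fig1}), so if one region occupied both corners of a single colour its contributions would cancel; hence each coefficient is $0$ or $\pm1$. Consequently, in the procedure of Theorem~\ref{Thm1}, for a row $c$ with nonzero $j$-th entry the factor turning that entry into $-\epsilon_c$ (the negative of the Goeritz index) is exactly $-\epsilon_c (M_D)_{cj}$, which is again $\pm1$, so the procedure is well defined. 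Summing, the vector built from column $j$ has $k$-th entry $-\sum_c \epsilon_c (M_D)_{cj}(M_D)_{ck}$; that is, the assembled matrix is $N := -M_D^{\top} D_\epsilon M_D$ with $D_\epsilon = \mathrm{diag}(\epsilon_1,\dots,\epsilon_{c(D)})$ recording the Goeritz indices. Thus the theorem reduces to understanding the blocks of
\[
N \;=\; -\begin{pmatrix} S^{\top}D_\epsilon S & S^{\top}D_\epsilon W \\ W^{\top}D_\epsilon S & W^{\top}D_\epsilon W \end{pmatrix}.
\]

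Next I would identify the upper-left block. Since opposite corners at a crossing share a colour and receive opposite signs, the two shaded corners always carry Dehn signs $+1$ and $-1$. Hence for distinct shaded regions $R_j,R_k$ a crossing contributes to $(S^{\top}D_\epsilon S)_{jk}$ exactly when the two meet there, and then $(M_D)_{cj}(M_D)_{ck}=-1$, giving $-(S^{\top}D_\epsilon S)_{jk}=\sum_c \epsilon_c = c_{jk}=g_{jk}$. For the diagonal, $-(S^{\top}D_\epsilon S)_{jj}=-\sum_{c\ni R_j}\epsilon_c$, and since each crossing at which $R_j$ appears as a shaded corner (with coefficient $\pm1$) pairs it with a unique second shaded region, this equals $-\sum_{\ell\ne j}c_{j\ell}=g_{jj}$. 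Therefore $-S^{\top}D_\epsilon S = G$, the Goeritz matrix.

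The heart of the argument, and the step I expect to be the main obstacle, is to show that the mixed block vanishes, $S^{\top}D_\epsilon W = 0$ (whence its transpose $W^{\top}D_\epsilon S$ vanishes as well). Fix a shaded region $R_j$ and an unshaded region $R_k$. As regions of opposite colour meet only at adjacent corners, the crossings contributing to $(S^{\top}D_\epsilon W)_{jk}$ are precisely the endpoints of the edges of $D$ along which $R_j$ and $R_k$ are the two flanking regions (the degenerate possibility that $R_j$ or $R_k$ occupies two opposite corners of one crossing forces a vanishing coefficient there and may be discarded). These endpoints therefore come in pairs, two per shared edge, so the plan is to prove a local cancellation lemma: if an edge $e$ joins crossings $c$ and $c'$ with $R_j$ on one side and $R_k$ on the other, then $\epsilon_c (M_D)_{cj}(M_D)_{ck} = -\,\epsilon_{c'}(M_D)_{c'j}(M_D)_{c'k}$. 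Granting this, each shared edge contributes $0$ and the entry vanishes.

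To prove the local lemma I would orient $e$, fix $R_j$ to its left and $R_k$ to its right, and note that this handedness is preserved as $e$ runs from $c$ to $c'$. At each endpoint both quantities are read off from purely local data: the product $(M_D)_{cj}(M_D)_{ck}$ of the Dehn signs of the flanking regions is fixed by the position of $e$ among the four strand-ends together with the over/under information (Figure~\ref{Fig1}), while $\epsilon_c$ is fixed by the same over/under information and the placement of the shaded regions (Figure~\ref{FigGi}). What remains is a finite case check: enumerate the local type of the endpoint $c$ (whether the strand through $e$ is over or under, and the rotational placement of the shaded corners), compute the signed product $\epsilon_c(M_D)_{cj}(M_D)_{ck}$, and observe that the compatibility forced by $e$—same underlying strand, consistent orientation, fixed left/right sides—makes the corresponding product at $c'$ the negative of the one at $c$. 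The delicate point, where I expect to spend the most care, is aligning the two independent sign conventions of Figures~\ref{Fig1} and~\ref{FigGi} against a common local frame so that this sign flip becomes transparent; once they are matched, the cancellation is a short verification that can be cross-checked against the data for $8_{19}$ in Examples~\ref{Ex2} and~\ref{Ex1}. Assembling the blocks, $N$ carries $-S^{\top}D_\epsilon S = G$ in its upper-left corner and zeros in both blocks that pair a shaded index with an unshaded one; hence the first $b$ columns reproduce the Goeritz matrix (over zeros) and the remaining entries joining a shaded and an unshaded region are all zero, which is the assertion of the theorem.
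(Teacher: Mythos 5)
Your reduction is sound and is, in essence, the paper's own argument written in matrix form: packaging the construction as $N=-M_D^{\top}D_\epsilon M_D$ (after checking that every entry of $M_D$ lies in $\{0,\pm1\}$ because opposite corners of a crossing share a colour and carry opposite Dehn signs) is a clean reformulation, and your identification of $-S^{\top}D_\epsilon S$ with the Goeritz matrix uses exactly the observation the paper uses. The gap is that everything you have written is bookkeeping around the one statement that carries the mathematical content: the local cancellation lemma, asserting that for an edge $e$ flanked by a shaded and an unshaded region the signed products $\epsilon_c(M_D)_{cj}(M_D)_{ck}$ at the two endpoint crossings are negatives of one another. You state it, you correctly locate the difficulty (reconciling the sign convention of Figure~\ref{Fig1} with that of Figure~\ref{FigGi} in a common local frame), and you correctly predict that it is a finite enumeration --- but you do not carry it out. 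The paper's proof consists almost entirely of that enumeration: the seven pairs of equations attached to Figure~\ref{Fig6}, running through the possible over/under and shading patterns at the two ends of the shared arc, plus the monogon cases. Until that check is done, the theorem has not been proved; the lemma is exactly where a sign convention could fail, which is why the paper spends its effort there.

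Two further points, one of substance. Discarding the crossings at which $R_j$ or $R_k$ occupies two opposite corners is not quite legitimate as stated: such a crossing is still an endpoint of an edge shared by $R_j$ and $R_k$ (indeed of two such edges), so deleting it leaves the partner crossings at the far ends of those edges unpaired and your ``two endpoints per shared edge'' accounting breaks. The clean repair is to expand $(M_D)_{cj}$ as a sum of corner signs and to pair \emph{edge-ends} rather than crossings; the local lemma then applies verbatim and the degenerate crossings contribute their two flags to two different edges. (The paper treats only the monogon instances of this degeneracy, so the omission is shared.) Finally, your closing sentence claims vanishing only for entries pairing a shaded with an unshaded index; this is the correct claim, since $W^{\top}D_\epsilon W$ is genuinely nonzero in general --- for $8_{19}$ the unshaded regions $R_6$ and $R_7$ meet only at the first crossing and contribute $-\epsilon_1(M_D)_{16}(M_D)_{17}=-1\neq 0$ --- so the theorem's instruction to repeat the procedure up to the number of columns of $M_D$ must be read as stopping at $j=b$, which is what the example and the paper's proof in fact do.
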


\begin{proof}
We set variables $X_1,\dots,X_n$ corresponding to the shaded regions $B_1, \dots, B_n$ of a knot diagram $D$ with the checkerboard coloring where the unbounded region is shaded, and $Y_{n+1},\dots,Y_m$ corresponding to the unshaded regions $W_{n+1},\dots,W_m$ of $D$. 
Here $n$ denotes the number of such shaded regions and $m$ the number of the complementary regions of $D$. 
By using the variables $X_1,\dots,X_n,Y_{n+1},\dots,Y_m$, we consider the Dehn coloring equations and let $M_D$ be the Dehn coloring matrix as its coefficient matrix. 

Then, for each equation in the Dehn coloring equations, the two variables corresponding to the shaded regions touching at the crossing associated to the equation must have the coefficients with opposite signs. 

For example, in the case of Figure~\ref{Fig5}, corresponding equations are the following.
\[
X_i + Y_j - X_k - Y_l = 0 , \quad  Y_i + X_j - Y_k - X_l =0
\]
\begin{figure}[htbt]
  {\unitlength=1mm
  \begin{picture}(80,20)
   \put(15,2){\includegraphics[width=5cm]{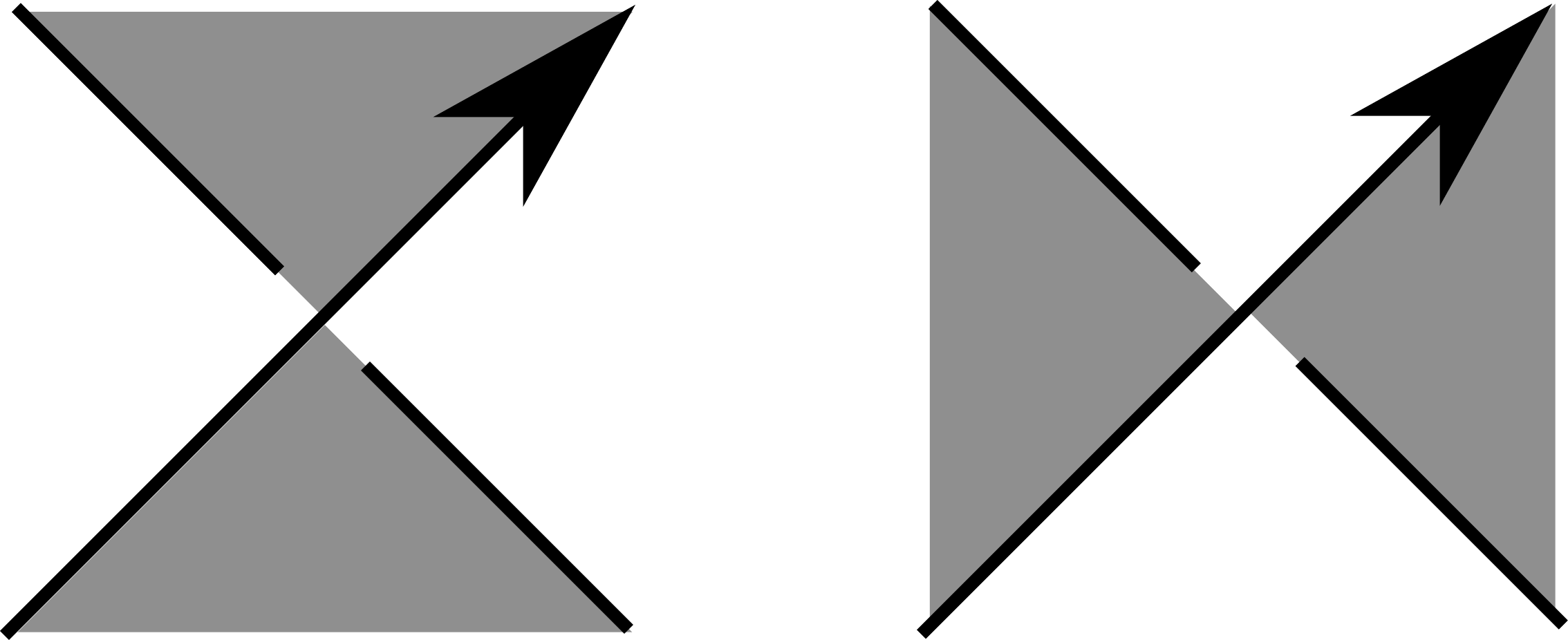}}
   \put(24,18){$B_i$}
   \put(31,11){$W_j$}
   \put(24,4){$B_k$}
   \put(15,11){$W_l$}
   \put(53,18){$W_i$}
   \put(60,11){$B_j$}
   \put(53,4){$W_k$}
   \put(46,11){$B_l$}
  \end{picture}}
\caption{}\label{Fig5}
\end{figure}

In both cases, the signs of the coefficients of the variables corresponding the shaded regions are opposite. 

Now, for the variable $X_j$, suppose that the corresponding region $B_j$ touches the $i$-th shaded region $B_i$ at some crossings. 
Then the $i$-th entry of the row vector obtained in the way stated in the theorem for $X_j$ is coincident to the entry $g_{ji}$ of the Goeritz matrix of $D$, for the $g_{ji}$ is the sum of the Goeritz indices of such crossings.  
Moreover the $j$-th entry of the row vector obtained in that way for $X_j$ is also coincident to the entry $g_{jj}$ of the Goeritz matrix of $D$.
Therefore the left half of the matrix consisting of the so obtained row vectors, which corresponds to the first to the $b$-th column, is equal to the Goeritz matrix of $D$. 

Next, consider the coefficients of the variables $Y_{n+1}, \dots, Y_m$ corresponding to the unshaded regions $W_{n+1}, \dots, W_m$ in the Dehn coloring equations. 
In the following, we show that, after the summing the Dehn coloring equations corresponding to the summing of the row vectors in the way of Theorem~\ref{Thm1}, the coefficients of $Y_k$'s are all zero in the obtained equations. 

Fix a variable $X_j$ corresponding to the region $B_j$, and consider a variable $Y_k$ corresponding to the region $W_k$ which is adjacent to the region $B_j$ along a part of an arc of the knot diagram. 

First, we consider the case that neither $B_j$ nor $W_k$ are monogons. 
Then 
the two equations associated to the endpoints of the part of the arc between $B_j$ and $W_k$  are described as one of the following, up to mirror image, in which the variables correspond to the regions indicated in Figure ~\ref{Fig6} respectively. 

{\small 
\[
\begin{cases}
X_j + Y_k - X' - Y' =0\\
- X_j - Y_k + X'' + Y'' =0
\end{cases}
\begin{cases}
X_j + Y_k - X' - Y' =0\\
X_j - Y_k - X'' + Y'' =0
\end{cases}
\begin{cases}
X_j + Y_k - X' - Y' =0\\
- X_j + Y_k + X'' - Y'' =0
\end{cases}
\]
\[
\begin{cases}
X_j - Y_k - X' + Y' =0\\
X_j - Y_k - X'' + Y'' =0
\end{cases}
\begin{cases}
X_j - Y_k - X' + Y' =0\\
-X_j + Y_k + X'' - Y'' =0
\end{cases}
\]
\[
\begin{cases}
X_j - Y_k - X' + Y' =0\\
- X_j - Y_k + X'' + Y'' =0
\end{cases}
\begin{cases}
-X_j + Y_k + X' - Y' =0\\
-X_j - Y_k + X'' + Y'' =0
\end{cases}
\]
}

\begin{figure}[htbt]
\centering
  {\unitlength=1mm
  \begin{picture}(120,60)
   \put(2,41){\includegraphics[width=7.5cm]{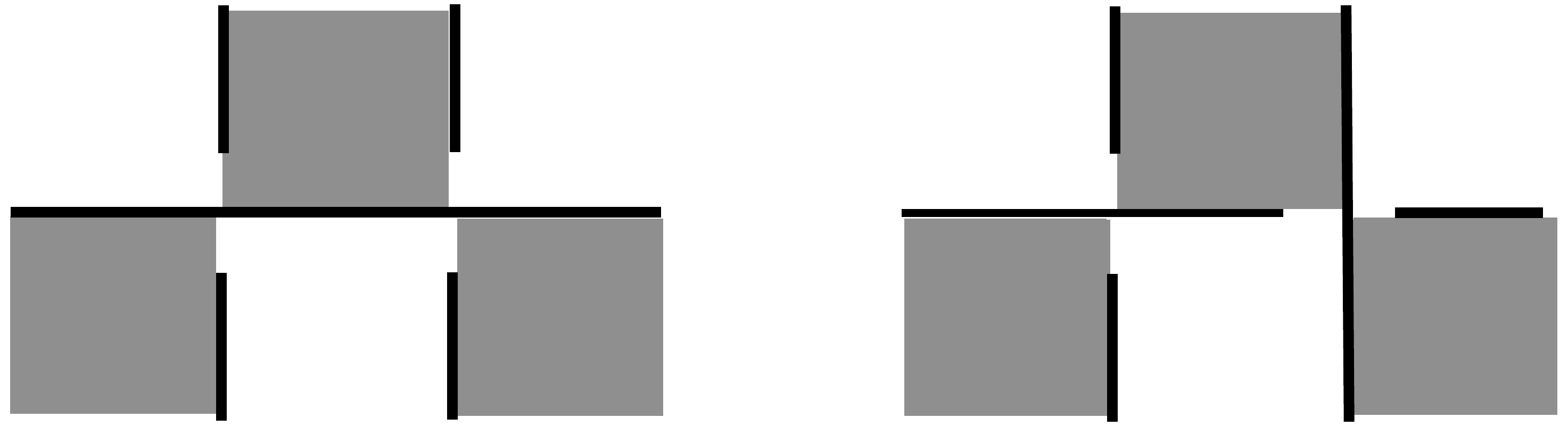}}
   \put(83,41.2){\includegraphics[width=3.6cm]{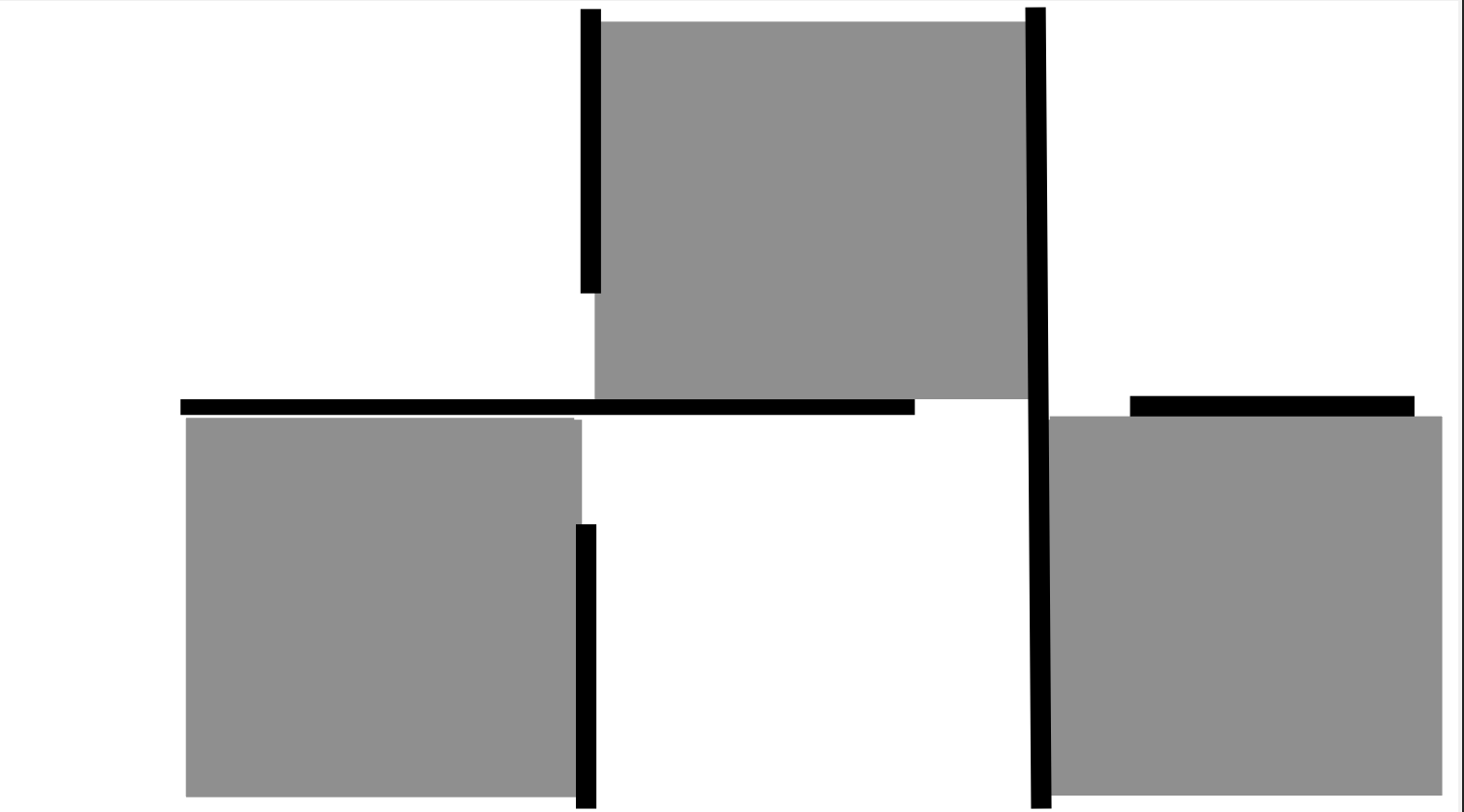}}
   \multiput(2,18)(43,0){2}{\includegraphics[width=3.2cm]{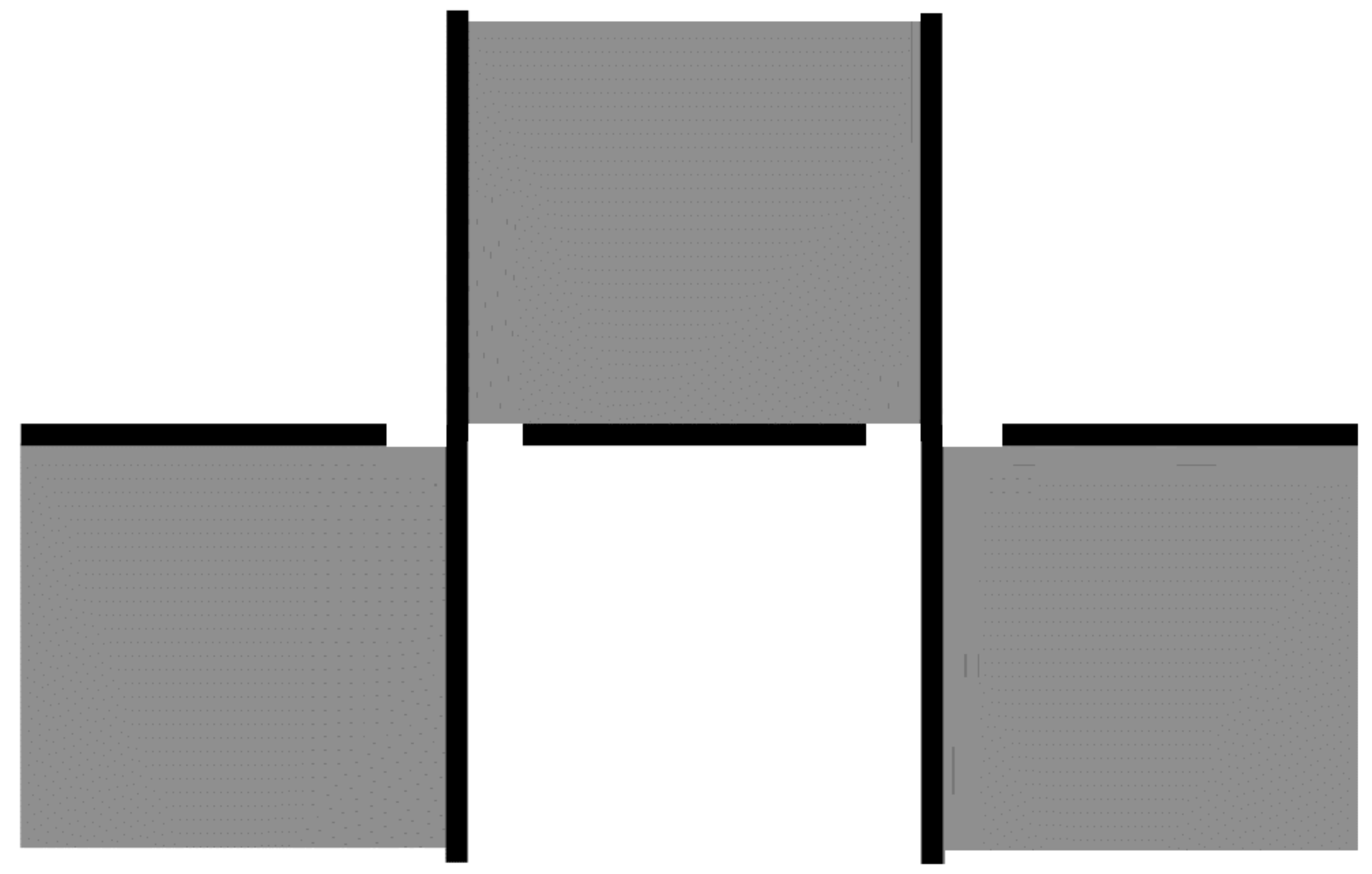}}
   \multiput(2,-4)(43,0){2}{\includegraphics[width=3.2cm]{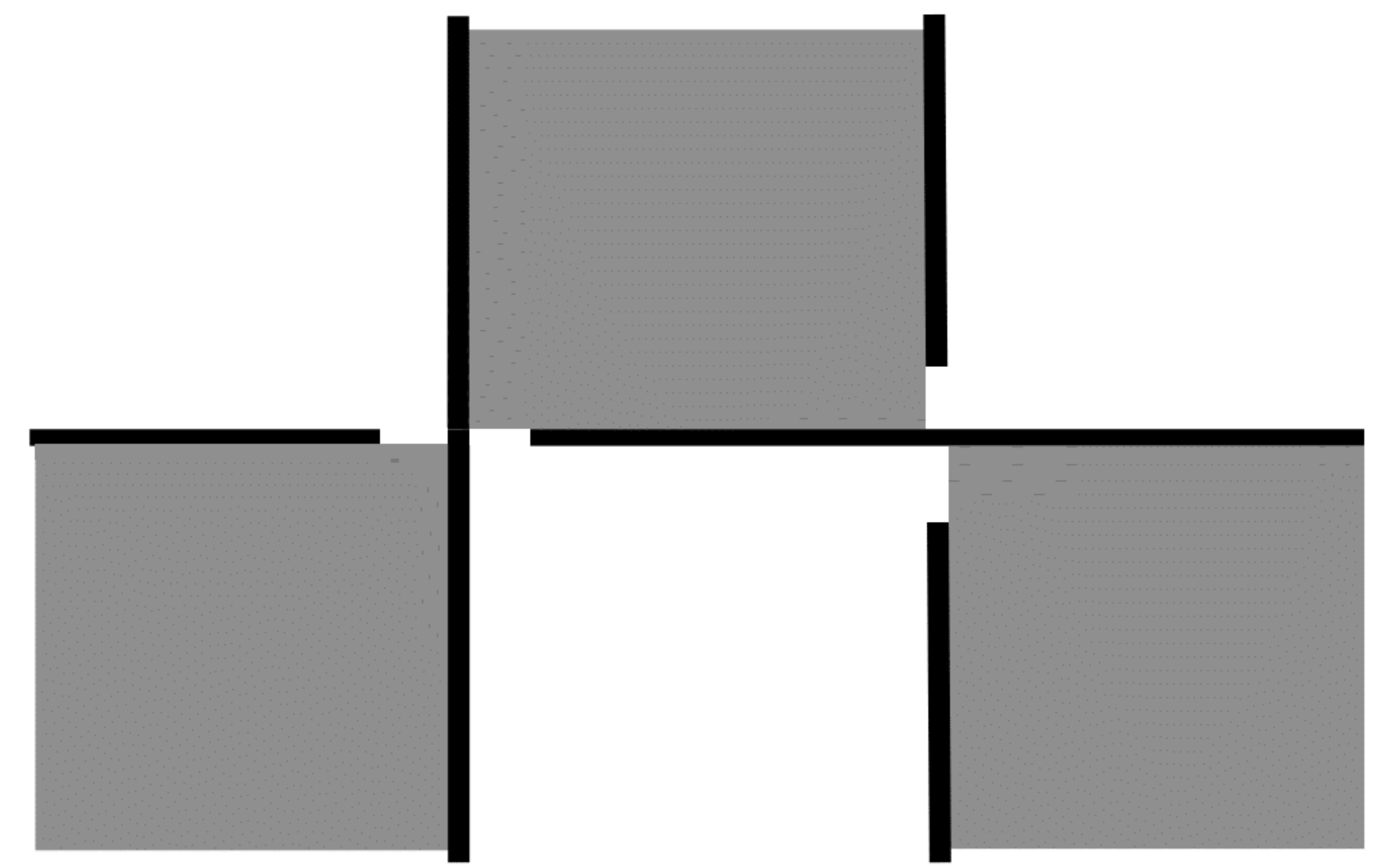}}
   \multiput(16,55)(0,-23){3}{$B_j$}
   \multiput(16,45)(0,-23){3}{$W_k$}
   \multiput(5,45)(0,-23){3}{$B'$}
   \multiput(27,45)(0,-23){3}{$B''$}
   \multiput(5,55)(0,-23){3}{$W'$}
   \multiput(27,55)(0,-23){3}{$W''$}
   \multiput(59,55)(0,-23){3}{$B_j$}
   \multiput(59,45)(0,-23){3}{$W_k$}
   \multiput(48,45)(0,-23){3}{$B'$}
   \multiput(70,45)(0,-23){3}{$B''$}
   \multiput(48,55)(0,-23){3}{$W'$}
   \multiput(70,55)(0,-23){3}{$W''$}
   \put(101,55){$B_j$}
   \put(101,45){$W_k$}
   \put(90,45){$B'$}
   \put(112,45){$B''$}
   \put(90,55){$W'$}
   \put(112,55){$W''$}
   \multiput(19,50.3)(41,0){3}{$>$}
   \multiput(18,5)(43,0){2}{$>$}
   \put(65.3,57){$\wedge$}
   \put(107.5,43){$\vee$}
   \multiput(11.5,33)(0,-23){2}{$\wedge$} 
   \put(22.6,33){$\wedge$} 
   \put(54.5,33){$\wedge$}
   \put(65.55,23){$\vee$}
   \put(54.55,-1){$\vee$}
   \end{picture}}
\caption{}
\label{Fig6}
\end{figure}

In all the cases, after taking the sum of the equations for $X_j$ in the way of Theorem~\ref{Thm1}, the coefficients of $Y_k$ become 0. 
For example, in the case of the left of the top of the figure, since the Goeritz indices of the crossings where the regions $B_j$ and $W_k$ touch are $+1$ (left) and $-1$ (right), the equations are summed as follows in the way of Theorem~\ref{Thm1}. 
\begin{align*}
&- ( X_j + Y_k - X' - Y' ) - ( -X_j - Y_k + X'' + Y'' ) \\
&= X' - X'' + Y' - Y''
\end{align*}
In the same way, when the regions $B_j$ and $W_k$ face along more than two arcs of the diagram $D$, we see that the coefficient of $Y_k$ becomes 0. 

Next,  we consider that the region $B_j$ is a monogon. 
Then the equation associated to the crossing touching $B_j$ is described as the following. 
\[
X_k + Y_k - X_j - Y_k = X_k - X_j = 0
\]
In the case of the region $W_k$ is a monogon, the equation associated to the crossing touching $B_j$ is described as the following. 
\[
X_k + Y_k - X_k - Y_l = Y_k - Y_l = 0
\]
In both cases, each equation can be ignored when the Dehn coloring equations are summed up corresponding to summing the row vectors in the way of Theorem~\ref{Thm1}. 

Thus, after summing the Dehn coloring equations corresponding to summing the row vectors in the way of Theorem~\ref{Thm1}, the coefficients of $Y_k$'s are shown to be all zero in the obtained equations. 
It concludes that all the column vectors from $(b+1)$-th to the last are zero vectors in the matrix obtained in the way of Theorem~\ref{Thm1}. 
\end{proof}

\begin{example}\label{Ex3}
By Example~\ref{Ex1}, for the diagram $D$ of the knot $8_{19}$, the Dehn coloring equations and the Dehn coloring matrix $M_D$ are given as follows. 
Note that the variables $Z_1, \dots, Z_5$ in Example~\ref{Ex1} are changed into $X_1, \dots, X_5$ and $Z_6, \dots, Z_9, Z_0$ are into $Y_1, \dots, Y_4, Y_5$.
\begin{align*}
X_2 + Y_2 -X_1 - Y_1 &= 0 \\
X_4 + Y_3 -X_2 - Y_2 &= 0 \\
X_5 + Y_1 -X_1 - Y_5 &= 0 \\
X_3 + Y_3 -X_5 - Y_1 &= 0 \\
X_4 + Y_4 -X_1 - Y_2 &= 0 \\
X_5 + Y_3 -X_4 - Y_4 &= 0 \\
X_2 + Y_3 -X_3 - Y_1 &= 0 \\
X_5 + Y_5 -X_1 - Y_4 &= 0
\end{align*}
\[M_D=
\begin{pmatrix}
-1 & 1 & 0 & 0 & 0 & -1 & 1 & 0 & 0 & 0 \\
0 & -1 & 0 & 1 & 0 & 0 & -1 & 1 & 0 & 0 \\
-1 & 0 & 0 & 0 & 1 & 1 & 0 & 0 & 0 & -1 \\
0 & 0 & 1 & 0 & -1 & -1 & 0 & 1 & 0 & 0 \\
-1 & 0 & 0 & 1 & 0 & 0 & -1 & 0 & 1 & 0 \\
0 & 0 & 0 & -1 & 1 & 0 & 0 & 1 & -1 & 0 \\
0 & 1 & -1 & 0 & 0 & -1 & 0 & 1 & 0 & 0 \\
-1 & 0 & 0 & 0 & 1 & 0 & 0 & 0 & -1 & 1 
\end{pmatrix}\]

The rows of $M_D$ with non-zero first entries are rows of number 1, 3, 5, 8, and the corresponding equations are as follows. 
\begin{align*}
X_2 + Y_2 -X_1 - Y_1 &= 0 \\
X_5 + Y_1 -X_1 - Y_5 &= 0 \\
X_4 + Y_4 -X_1 - Y_2 &= 0 \\
X_5 + Y_5 -X_1 - Y_4 &= 0
\end{align*}
From Figure~\ref{Fig4}, we see that the Goeritz indices of the crossings corresponding to the rows are
\[ -1, \quad -1, \quad -1, \quad -1 \]
and so, we have the following by taking the sum of them in the way of Theorem~\ref{Thm1}. 
{\small \begin{align*}
&(-1) ( X_2 + Y_2 -X_1 - Y_1 ) + (-1) (X_5 + Y_1 -X_1 - Y_5 ) \\
&+ (-1) (X_4 + Y_4 -X_1 - Y_2 ) + (-1) ( X_5 + Y_5 -X_1 - Y_4 ) \\
&= 4 X_1 -X_2 - X_4 -2 X_5 = 0
\end{align*}}
Repeatedly perform the same procedure from the second to the fifth column, we obtain the following equations. 
\begin{align*}
- X_1 - X_2 + X_3 + X_4 &= 0 \\
X_2 -2 X_3 + X_5 &= 0 \\
X_1 + X_2 - 3 X_4 + X_5 &= 0 \\
-2 X_1 + X_3 + X_4 &= 0
\end{align*}
The coefficient matrix of this equations is as follows. 
\[
\begin{pmatrix}
4 & -1 & 0 & -1 & -2 \\
-1 & -1 & 1 & 1 & 0 \\
0 & 1 & -2 & 0 & 1 \\
-1 & 1 & 0 & -1 & 1 \\
-2 & 0 & 1 & 1 & 0
\end{pmatrix}
\]
This matrix is coincident to the Goeritz matrix of the diagram $D$ of the knot $8_{19}$ obtained in Example~\ref{Ex2}. 
\end{example}

\bigskip

Next we show that, when a given knot diagram is prime, the Goeritz matrix is constructed from the Dehn coloring matrix, up to sign, purely algebraically. 
Here a knot diagram is called \textit{prime} if any simple closed curve transversely intersecting the diagram with only two points bounds a disk which intersects the diagram with single unknotted arc. 

\begin{theorem}\label{Thm2}
Given a knot diagram $D$ with the checkerboard coloring where the unbounded regions is shaded, let $M_D$ be the corresponding Dehn coloring matrix. 
Suppose that $D$ is prime and the columns of $M_D$ corresponding to the shaded regions are the columns from the first to the $b$-th. 
For the $j$-th column of $M_D$, take the rows of $M_D$ whose $j$-th entries are non-zero. 
Then, by multiplying the rows $\pm 1$ and summing up suitably, one can obtain a row vector whose entries from the $(b+1)$-th to the last are all zero, and such a way to obtain the vector is unique up to sign. 
Moreover, by fixing one of the so obtained row vectors, and multiplying the other row vectors by $\pm 1$, one can obtain the Goeritz matrix of $D$ up to sign as a non-zero part of the obtained matrix. 
\end{theorem}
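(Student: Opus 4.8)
The plan is to reduce the statement to the computation of the cycle space of a graph, and then to extract from primeness the single fact that makes that cycle space one-dimensional. Throughout, fix a column $j$ with $1 \le j \le b$, corresponding to a shaded region $B_j$, and let $S_j$ be the set of rows of $M_D$ whose $j$-th entry is nonzero. These are exactly the crossings at which $B_j$ occupies a single one of the four wedges; if $B_j$ occupied two opposite wedges at a crossing the two copies of $X_j$ would cancel and that row would fall outside $S_j$. As in the proof of Theorem~\ref{Thm1}, at each crossing in $S_j$ the two unshaded regions are precisely the two regions meeting $B_j$ across the two arcs that bound its wedge, and they enter the row with opposite signs. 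Hence, writing $e_a$ for the basis vector indexed by an unshaded region $W_a$, the restriction of each row $r \in S_j$ to the unshaded columns has the form $\beta_r(e_a - e_{a'})$ with $\beta_r \in \{\pm 1\}$, where $W_a, W_{a'}$ are the two unshaded regions at the crossing $r$.

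Next I would form the graph $\Gamma_j$ whose vertices are the unshaded regions adjacent to $B_j$ and whose edges are the crossings in $S_j$, each such crossing joining its two unshaded regions. A sign assignment $(\epsilon_r)_{r \in S_j}$ kills the unshaded part of $\sum_r \epsilon_r r$ if and only if $\sum_r \epsilon_r \beta_r (e_a - e_{a'}) = 0$, that is, if and only if $(\epsilon_r \beta_r)_r$ lies in the kernel of the boundary map of $\Gamma_j$, i.e. in the cycle space $H_1(\Gamma_j)$. Theorem~\ref{Thm1} already exhibits one such assignment, namely the negatives of the Goeritz indices, so $H_1(\Gamma_j) \neq 0$; and the resulting row vector is unique up to sign precisely when $\dim H_1(\Gamma_j) = 1$, equivalently when $\Gamma_j$ is a single cycle. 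Since $B_j$ is a disk its boundary is a single closed walk, so $\Gamma_j$ is connected; and once the monogon and nugatory configurations of the proof of Theorem~\ref{Thm1} are set aside (these drop out of $S_j$ or contribute a vanishing unshaded part, hence impose no constraint), each unshaded region meeting $B_j$ along an arc receives the two endpoint crossings of that arc as incident edges, so every vertex of $\Gamma_j$ has degree at least $2$. Thus $\Gamma_j$ is a single cycle if and only if every unshaded region meets $B_j$ along exactly one arc, which is the single point I must extract from primeness.

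The crux, and the step I expect to be the main obstacle, is therefore the geometric lemma that if $D$ is prime then no unshaded region meets $B_j$ along two distinct arcs. I would argue by contradiction: if $W_a$ met $B_j$ along distinct arcs $\alpha_1, \alpha_2$, choose $x_1 \in \alpha_1$ and $x_2 \in \alpha_2$ and join them by an arc through $B_j$ and an arc through $W_a$ to form a simple closed curve $\sigma$ meeting $D$ transversely in exactly $x_1, x_2$. By primeness $\sigma$ bounds a disk $\Delta$ meeting $D$ in a single unknotted arc $A$ with endpoints $x_1, x_2$; reading ``single unknotted arc'' combinatorially, $\Delta$ then contains no crossing. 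But an arc of $D$ with no crossing in its interior lies in a single edge of the diagram, so the edge through $x_1$ and the edge through $x_2$ coincide, forcing $\alpha_1 = \alpha_2$, a contradiction. The delicate work lies in pinning down this reading of primeness so that the no-crossing conclusion is genuinely available, and in confirming that the excluded nugatory and monogon configurations really do leave the degree bound of the previous paragraph intact; granting the lemma, $\Gamma_j$ is a single cycle, $\dim H_1(\Gamma_j) = 1$, and the row vector produced from the $j$-th column is unique up to sign, which is the first assertion.

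For the final assertion I would repeat the construction for every column $1 \le j \le b$, obtaining row vectors $v_1, \dots, v_b$, each determined up to sign. By Theorem~\ref{Thm1}, for the sign choice given by the negatives of the Goeritz indices the restriction of $v_j$ to the first $b$ columns is exactly the $j$-th row of the Goeritz matrix $G$, while its remaining entries vanish. Hence this particular signing assembles $G$ itself as the nonzero left block; negating every $v_j$ simultaneously replaces $G$ by $-G$ and is algebraically indistinguishable, which accounts for the overall sign. Therefore, after fixing the sign of one $v_j$, a suitable choice of signs for the rest recovers $G$ up to sign, completing the proof.
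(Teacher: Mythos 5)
Your proposal is correct and its core is the same as the paper's: both hinge on the single geometric consequence of primeness that a shaded region $B_j$ and an adjacent unshaded region $W_k$ meet along exactly one arc, so exactly two rows of $M_D$ couple $X_j$ with $Y_k$, and the sign of one row propagates to determine all the others around $\partial B_j$. Your cycle-space formulation ($\Gamma_j$ a single cycle, $\dim H_1(\Gamma_j)=1$) is a more explicit and arguably more careful packaging of the paper's propagation argument --- in particular it makes visible the consistency check when the propagation closes up around the cycle, and it isolates exactly where degenerate (monogon/nugatory) crossings would break uniqueness, points the paper passes over silently. The one genuinely different step is the last one: you fix the global signs by importing the Goeritz-index signing from Theorem~\ref{Thm1}, which proves the existence claim as literally stated but uses geometric data of the diagram; the paper instead fixes the signs by requiring the resulting left block to be \emph{symmetric}, which is what delivers the ``purely algebraic'' construction promised in the introduction (the Goeritz matrix is symmetric, and the symmetric choice is unique up to a global sign). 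If you want your proof to support that algorithmic reading of the theorem, you should replace your final paragraph by the symmetry criterion, or at least note that the symmetric normalization recovers your Theorem~\ref{Thm1} signing up to sign.
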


\begin{proof}
In the same way of the proof of Theorem~\ref{Thm1}, we set the variables $X_1,\dots,X_n$ corresponding to the shaded regions $B_1, \dots, B_n$ of the knot diagram $D$ and the variables $Y_{n+1}, \dots , Y_m$ corresponding to the unshaded regions $W_{n+1},\dots,W_m$ of $D$. 
Consider the Dehn coloring equations by using $X_1,\dots,X_n,Y_{n+1},\dots,Y_m$, and let $M_D$ be the Dehn coloring matrix for $D$ as the coefficient matrix of the equations. 

By Theorem~\ref{Thm1}, from the rows of $M_D$ with non-zero $j$-th entries, one can multiply them $\pm 1$ suitably and sum them up so that the entries of the obtained row vectors from $(b+1)$-th to the last are all zero. 
In terms of the Dehn coloring equations, one can suitably take a sum of the Dehn coloring equations so that the coefficient of $W_k$'s of the equations so obtained are all zero. 
In the following, we show that there is a unique way, up to sign, to do so. 

Now, we fix a shaded region $B_j$ corresponding to the variable $X_j$, and consider an unshaded region $W_k$ adjacent to $B_j$ corresponding to the variable $Y_k$. 
Then, by the assumption that the diagram $D$ is prime, the regions $B_j$ and $W_k$ are adjacent along a single arc of $D$. 
This implies that there are only two equations which include the variables $X_j$ and $Y_k$ among the Dehn coloring equations. 
Thus, if one of the equations including $X_j$ is fixed, there is a unique way to multiply $\pm 1$ and sum up the other equations including $X_j$ to obtain an equation whose coefficients of $Y_k$'s are all zero. 
Here, for each $j$, there still remains an ambiguity to fix the sign of the coefficient of $X_j$ in the first equation. 
That is, we do not fix a system of equations in such a way, and so, do not fix a coefficient matrix. 

To fix a matrix, we multiply $\pm 1$ to each obtained equation so that the coefficient matrix of the obtained system of equations with variables $X_1 , \dots , X_b$ to be a symmetric matrix. 
Then we can fix the coefficient matrix up to sign. 
The matrix so obtained must contain the Goeritz matrix or the negative in the left half, since the Goeritz matrix is symmetric by definition. 
\end{proof}

\begin{example}\label{Ex4}
In the same way as Example~\ref{Ex3}, we consider the Dehn coloring matrix $M_D$ of the diagram $D$ of the knot $8_{19}$. 
\[M_D=
\begin{pmatrix}
-1 & 1 & 0 & 0 & 0 & -1 & 1 & 0 & 0 & 0 \\
0 & -1 & 0 & 1 & 0 & 0 & -1 & 1 & 0 & 0 \\
-1 & 0 & 0 & 0 & 1 & 1 & 0 & 0 & 0 & -1 \\
0 & 0 & 1 & 0 & -1 & -1 & 0 & 1 & 0 & 0 \\
-1 & 0 & 0 & 1 & 0 & 0 & -1 & 0 & 1 & 0 \\
0 & 0 & 0 & -1 & 1 & 0 & 0 & 1 & -1 & 0 \\
0 & 1 & -1 & 0 & 0 & -1 & 0 & 1 & 0 & 0 \\
-1 & 0 & 0 & 0 & 1 & 0 & 0 & 0 & -1 & 1 
\end{pmatrix}\]
The rows of $M_D$ with non-zero first entries are rows of number 1, 3, 5, 8. 
Fix the first row of $M_D$, and suitably multiply $\pm 1$ to the rows 3, 7, 8 and sum them up so that the entries from 6-th to 10-th are all 0. 

For example, focusing the $(1,6)$- and $(3,6)$-entries in $M_D$, we should multiply $+1$ to the 3rd row and add it to the first row. 
Similarly, considering the rows 7 and 8, we have the following new row vector. 
\begin{align*}
&\begin{pmatrix}
-1 & 1 & 0 & 0 & 0 & -1 & 1 & 0 & 0 & 0 
\end{pmatrix}
+
\begin{pmatrix}
-1 & 0 & 0 & 0 & 1 & 1 & 0 & 0 & 0 & -1 
\end{pmatrix}\\
&+
\begin{pmatrix}
-1 & 0 & 0 & 1 & 0 & 0 & -1 & 0 & 1 & 0 
\end{pmatrix}
+
\begin{pmatrix}
-1 & 0 & 0 & 0 & 1 & 0 & 0 & 0 & -1 & 1 
\end{pmatrix} \\
&=
\begin{pmatrix}
-4 & 1 & 0 & 1 & 2 & 0 & 0 & 0 & 0 & 0 
\end{pmatrix}
\end{align*}

Repeatedly, performing the same procedure, we obtain the following. 
\begin{align*}
\begin{pmatrix}
-1 & -1 & 1 & 1 & 0 & 0 & 0 & 0 & 0 & 0 
\end{pmatrix}\\
\begin{pmatrix}
0 & -1 & 2 & 0 & -1 & 0 & 0 & 0 & 0 & 0 
\end{pmatrix}\\
\begin{pmatrix}
1 & -1 & 0 & 1 & -1 & 0 & 0 & 0 & 0 & 0 
\end{pmatrix}\\
\begin{pmatrix}
-2 & 0 & 1 & 1 & 0 & 0 & 0 & 0 & 0 & 0 
\end{pmatrix}
\end{align*}

Arranging the row vectors so obtained to get the following matrix. 
\[\begin{pmatrix}
-4 & 1 & 0 & 1 & 2 & 0 & 0 & 0 & 0 & 0 \\
-1 & -1 & 1 & 1 & 0 & 0 & 0 & 0 & 0 & 0 \\
0 & -1 & 2 & 0 & -1 & 0 & 0 & 0 & 0 & 0 \\
1 & -1 & 0 & 1 & -1 & 0 & 0 & 0 & 0 & 0 \\
-2 & 0 & 1 & 1 & 0 & 0 & 0 & 0 & 0 & 0 
\end{pmatrix}\]
However, this is not symmetric, and so, retaking row vectors suitably so that the resultant matrix contains a symmetric matrix. 
Then the following matrix is constructed. 
\[\begin{pmatrix}
-4 & 1 & 0 & 1 & 2 & 0 & 0 & 0 & 0 & 0 \\
1 & 1 & -1 & -1 & 0 & 0 & 0 & 0 & 0 & 0 \\
0 & -1 & 2 & 0 & -1 & 0 & 0 & 0 & 0 & 0 \\
1 & -1 & 0 & 1 & -1 & 0 & 0 & 0 & 0 & 0 \\
2 & 0 & -1 & -1 & 0 & 0 & 0 & 0 & 0 & 0 
\end{pmatrix}\]
The left part of the matrix, that is, the square matrix consisting of the first to the fifth columns, is the negative of the Goeritz matrix obtained in Example~\ref{Ex2}. 
\end{example}

\bigskip

\section*{Acknowledgements}
This article is a result of an SSH (Super Science High-school) project conducted at the Fukushima Prefectural Fukushima High School supported by Japan Science and Technology Agency (JST). 
The authors thank to Norio Hangai, a teacher of Fukushima High School, for his all supports and advice during this program.

\end{document}